\newtheorem{thm}{Theorem}[section]
\newtheorem{cor1}[thm]{Corollary}
\newtheorem{prop1}[thm]{Proposition}
\newtheorem{obs1}[thm]{Observation}
\theoremstyle{definition}
\theoremstyle{remark}
\numberwithin{equation}{section}
\newcommand{\BibTeX}{B\kern-0.1emi\kern-0.017emb\kern-0.15em\TeX}
\newcommand{\XYpic}{$\mathrm{X\kern-0.3em\raisebox{-0.18em}{Y}}$-$\mathrm{pic}\,$}
\newcommand{\cl}{C \kern -0.1em \ell}  
\newcommand{\ed}{\end{document}}
\begin{document}

%
%
%
%
%
%
%
%
%

\title{The palette index of the Cartesian product of paths,  cycles and regular graphs}

\author{Aleksander Vesel}
\affil{%
Faculty of Natural Sciences and Mathematics\\
University of Maribor\\
Koro\v ska cesta 160\\
 SI-2000 Maribor, Slovenia and \\
IMFM\\
Jadranska 19\\
 SI-1000 Ljubljana, Slovenia}

\date{\today}

\makeatletter
\def\tikz@foreach{%
  \def\pgffor@beginhook{%
    \tikz@lastx=\tikz@foreach@save@lastx%
    \tikz@lasty=\tikz@foreach@save@lasty%
    \tikz@lastxsaved=\tikz@foreach@save@lastxsaved%
    \tikz@lastysaved=\tikz@foreach@save@lastysaved%
    \let\tikz@moveto@waiting=\tikz@foreach@moveto@waiting
    \setbox\tikz@figbox=\box\tikz@tempbox\expandafter\tikz@scan@next@command\pgfutil@firstofone}%
  \def\pgffor@endhook{\pgfextra{%
      \xdef\tikz@foreach@save@lastx{\the\tikz@lastx}%
      \xdef\tikz@foreach@save@lasty{\the\tikz@lasty}%
      \xdef\tikz@foreach@save@lastxsaved{\the\tikz@lastxsaved}%
      \xdef\tikz@foreach@save@lastysaved{\the\tikz@lastysaved}%
      \global\let\tikz@foreach@moveto@waiting=\tikz@moveto@waiting
      \global\setbox\tikz@tempbox=\box\tikz@figbox\pgfutil@gobble}}%
  \def\pgffor@afterhook{%
    \tikz@lastx=\tikz@foreach@save@lastx%
    \tikz@lasty=\tikz@foreach@save@lasty%
    \tikz@lastxsaved=\tikz@foreach@save@lastxsaved%
    \tikz@lastysaved=\tikz@foreach@save@lastysaved%
    \let\tikz@moveto@waiting=\tikz@foreach@moveto@waiting
    \setbox\tikz@figbox=\box\tikz@tempbox\tikz@scan@next@command}%
  \global\setbox\tikz@tempbox=\box\tikz@figbox%
  \xdef\tikz@foreach@save@lastx{\the\tikz@lastx}%
  \xdef\tikz@foreach@save@lasty{\the\tikz@lasty}%
  \xdef\tikz@foreach@save@lastxsaved{\the\tikz@lastxsaved}%
  \xdef\tikz@foreach@save@lastysaved{\the\tikz@lastysaved}%
    \global\let\tikz@foreach@moveto@waiting=\tikz@moveto@waiting
  \foreach}
\makeatother

\maketitle

\begin{abstract}
The palette of a vertex \( v \) in a graph \( G \) is the set of colors assigned to the edges incident to \( v \).  
The palette index of \( G \) is the minimum number of distinct palettes among the vertices, taken over all proper edge colorings of \( G \).  
This paper presents results on the palette index of the Cartesian product \( G \Box H \), where one of the factor graphs is a path or a cycle.  
Additionally, it provides exact results and bounds on the palette index of the Cartesian product of two graphs, where one factor graph is isomorphic to a regular or class 1 nearly regular graph.  
\end{abstract} 

\

\section{Introduction}

Let $G=(V, E)$ be a simple connected graph.
An {\em edge-coloring} of $G$ is a map that assigns colors to the edges of $G$.
An edge coloring is {\em proper} if two incident edges obtain different colors.
A {\em $k$-edge-coloring}  of $G$ is a proper edge-coloring with colors from the set 
$\{1,\ldots, k\}$. The minimum number of colors required in a proper edge-coloring of a graph $G$ is called the {\em chromatic index}  of $G$ and denoted by $\chi'(G)$.

It is  well-known that the chromatic index of a graph  $G$ is equal either to 
$\Delta$ or $\Delta+1$, where $\Delta$  denotes its maximum degree; we then say that $G$ is of {\em class 1 }or {\em class 2}, respectively.  

The  {\em palette} of a vertex 
$v \in V (G)$ with respect to a proper edge-coloring $f$ of $G$ is the set $P_f(v) =
\{f(e) \, : \, e \in E(G) \, {\rm and} \, e \,  {\rm is \, incident \, to} \, v\}$. 

If $f$ is a proper edge-coloring of $G$ and $X \subseteq V(G)$, then $p_f(X)$ 
is the number of distinct palettes of the vertices of $X$ with respect to $f$. 
The {\em palette index} of a graph $G$, denoted by  $\check s(G)$, is 
the minimal value of $p_f(V(G))$ taken over all proper edge-colorings of $G$.

The palette index was introduced by Horňák et al. in \cite{hornak}, where initial results on the palette index of cubic and complete graphs were provided.  
Subsequent studies have investigated the palette index for other regular graphs \cite{Bonvicini, italijani}.  
Further classes of graphs explored in relation to this invariant include trees \cite{Bonvicini, italijani}, complete bipartite graphs \cite{hornak2}, and Cartesian products.  
Specifically, \cite{Smbatyan} presents partial results on the palette index of the Cartesian product of a path and a cycle, while \cite{Casselgren} establishes the palette index of the Cartesian product of two paths.  

The palette index has also been analyzed in relation to the maximum and minimum degree of a graph \cite{italijani, Casselgren}.  
Additionally, it has found applications in modeling the self-assembly of DNA structures with branched junction molecules that possess flexible arms \cite{Bonvicini}.

In this paper, we build upon previous research on the palette index of the Cartesian product of two graphs, focusing on cases where one factor graph is a path or cycle. Additionally, we extend the study to families of Cartesian products where one of the factors is a regular or class 1 “nearly regular” graph. 

The next section provides necessary definitions and preliminary results used throughout the paper. Section 3 introduces the class of nearly regular graphs and presents results on the palette index of Cartesian products where one factor is either regular or a class 1 nearly regular graph. Notably, this section establishes that the palette index of a Cartesian product with a class 1 nearly regular graph as one factor is always 2. 

Section 4 continues the exploration of the palette index for Cartesian products involving a path or cycle as one factor. In particular, it includes a construction that produces an edge coloring with three palettes for the Cartesian product of two odd cycles. 

Finally, Section 5 concludes the paper by applying the results from earlier sections to determine the palette index of Cartesian products where one factor graph is either a cycle or a path and the other is a regular graph.

\section{Preliminaries}

The {\em Cartesian product} of graphs $G$ and $H$ is the graph $G \Box H$  with vertex set $V(G)\times V(H)$
and $(x_1,x_2)(y_1,y_2) \in E(G \Box H)$  whenever  $x_1y_1 \in E(G)$ and $x_2=y_2$,  or
 $x_2y_2 \in E(H)$ and $x_1=y_1$.  The Cartesian product is clearly commutative. 
 

Let $[n]$ and  $[n]_0$ denote the sets $\{1,2, \ldots, n\}$ and  $\{0,1, \ldots, n-1\}$, respectively.  
We will assume in the sequel that $V(P_n) = V(C_n) := [n]_0$ and 
$V(P_n \Box C_m) = V(C_n \Box C_m) := [n]_0 \times  [m]_0$ . 

Let $G$ be a graph. If $X \subset E(G)$, then the spanning subgraph of $G$ with the edge set 
$E(G) \setminus X$ is denoted by $G - X$. 

A {\em matching} in a graph \( G \) is a subset \( M \subseteq E(G) \) such that no two edges in \( M \) share a common vertex. Naturally, if \( f \) is a proper edge-coloring of \( G \), the set of edges assigned a specific color \( i \) under \( f \) constitutes a matching in \( G \). A matching \( M \) is called 
{\em perfect} if every vertex in \( V(G) \) is incident to exactly one edge from \( M \).


We begin with the following obvious observation.
\begin{obs1}  \label{observation}
If $G$ is a graph such that the degree of every vertex of $V(G)$ is from the set 
$\{d_1, d_2, \ldots, d_k \}$, then $\check s(G) \ge k$.
\end{obs1}
It is also not difficult to confirm the following result. 
\begin{prop1} \label{matching}
If  $M$ is a perfect matching of a nontrivial graph $G$,  then $\check s(G) \le \check s(G - M)$.
\end{prop1}

\begin{proof} 
Let $g: E(G-M) \rightarrow C$ be an edge coloring of $G - M$ with $\check s(G - M)$ palettes and
 let $c \not \in  C$.   It is easy to construct the  edge coloring $h$ of $G$,  where for every $e \in M$ we set 
 $h(e) := c$, while for every $e' \in E(G) \setminus M$ we set $h(e') := g(e')$.  
 Since for every  $v \in V(G)$ we have $P_h(v) = P_g(v) \cup \{ c  \}$, 
 it follows that  $\check s(G) \le \check s(G - M)$.
\end{proof} 

\begin{figure}[!ht]
	\centering
		\includegraphics[width=8cm]{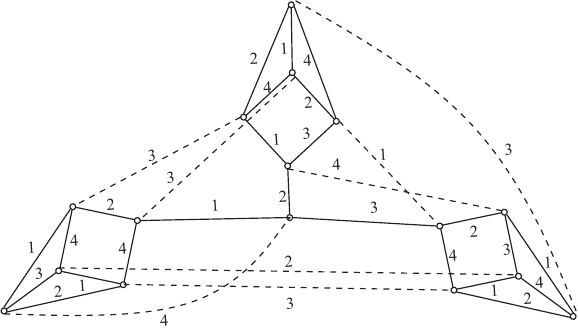}
	\caption{An edge coloring of a 4-regular graph $G$ with a perfect matching $M$}
\label{M}
\end{figure}

With respect to Proposition  \ref{matching}, it is worth noting that does not  necessarily  hold that $\check s(G) \ge \check s(G - M)$,  even if $G$ is a regular graph. 
Consider,  for instance,  an edge coloring of the 4-regular graph $G$ shown in Fig. \ref{M},  where a perfect matching $M$ is indicated by dashed lines.  In this case, we observe that 
$\check s(G) = 1$,  while $\check s(G - M) \not =  1$.
This discrepancy arises because
 $G-M$ is a cubic graph containing a vertex $v$ such that every edge incident to $v$ is a bridge of $G-M$. Consequently,   $\check s(G - M) = 4$  (see \cite[Proposition 3.3]{italijani}). 



The following two results are shown in \cite{hornak2}.
\begin{prop1} \label{hornak}
Let $G$ be a $r$-regular graph. Then $\chi'(G) = r$ if and only $\check s(G)=1$. 
\end{prop1}

\begin{prop1} \label{hornak2}
If $G$ is  a regular graph,  then $\check s(G) \not= 2$. 
\end{prop1}

As shown in \cite{mahmo}, the Cartesian product of two  graphs  is class 1 if at least one of the 
factors is class 1.  This result is more formaly stated in the next theorem.  
(Since the proof of the theorem is based on a construction that we will need in the sequel,  
we stated it explecitly  although an analogous aproach has been already used in \cite{mahmo}). 

\begin{thm} \label{mah}
Let $G$ and $H$ be graphs.  If $G$ is class 1 nontrivial graph,  then $G \Box H $ is class 1. 
\end{thm}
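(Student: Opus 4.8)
The plan is to build an explicit proper edge-coloring of $G \Box H$ using exactly $\Delta(G \Box H)$ colors. First I would record the elementary degree identity $\deg_{G \Box H}(u,v) = \deg_G(u) + \deg_H(v)$, whence $\Delta(G \Box H) = \Delta(G) + \Delta(H)$; this already supplies the lower bound $\chi'(G \Box H) \ge \Delta(G) + \Delta(H)$, so only a matching upper bound is needed. Write $d := \Delta(G)$. Since $G$ is class 1 we may fix a proper edge-coloring $\alpha$ of $G$ using the $d$ colors of $[d]$, and by the bound recalled in the introduction we may fix a proper edge-coloring $\beta$ of $H$ using at most $\Delta(H)+1$ colors. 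Because $G$ is nontrivial, $d \ge 1$, so $\Delta(H)+1 \le d + \Delta(H)$ and the whole construction will fit inside the palette $[d+\Delta(H)]$.

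The construction decomposes $E(G \Box H)$ into the $G$-layers $\{(x,v)(y,v) : xy \in E(G)\}$ (one copy of $G$ for each $v \in V(H)$) and the $H$-layers $\{(u,x)(u,y) : xy \in E(H)\}$ (one copy of $H$ for each $u \in V(G)$). I would color every $H$-layer by the same coloring $\beta$. The crucial consequence of using one fixed $\beta$ on all $H$-layers is that the set of colors appearing on $H$-layer edges at a vertex $(u,v)$, namely $B_v := \{\beta(e) : e \in E(H),\, e \ni v\}$, depends only on $v$ and not on $u$. Hence, within a single $G$-layer (the copy of $G$ indexed by $v$), the colors forbidden by the already-colored $H$-layer edges are the same at every vertex of that layer: precisely the $\deg_H(v) \le \Delta(H)$ colors of $B_v$.

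It then remains to color each $G$-layer. As distinct $G$-layers are pairwise vertex-disjoint, they may be colored independently. For the layer indexed by $v$, the available colors are $[d+\Delta(H)] \setminus B_v$, of which there are $(d+\Delta(H)) - \deg_H(v) \ge d$; since this forbidden set is uniform over the layer, I can take the class-1 coloring $\alpha$ of $G$ and relabel its $d$ colors by any $d$ of the available ones, producing a proper coloring of the layer that avoids $B_v$ at every vertex. Checking properness of the resulting coloring of $G \Box H$ splits into the three adjacency types: two $H$-layer edges meeting at a vertex lie in a common $H$-layer and are separated by $\beta$; two $G$-layer edges meeting at a vertex lie in a common $G$-layer and are separated by the relabeled $\alpha$; and a $G$-layer edge meets an $H$-layer edge in colors drawn from the disjoint sets $[d+\Delta(H)] \setminus B_v$ and $B_v$. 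Thus the coloring is proper and uses at most $d+\Delta(H)$ colors, giving $\chi'(G \Box H) = \Delta(G) + \Delta(H)$, so $G \Box H$ is class 1.

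The main obstacle — and the single place where the hypothesis is spent — is the counting step guaranteeing at least $d$ available colors per $G$-layer, combined with the need to edge-color $G$ within that many colors. Because the palette has size $d+\Delta(H)$ and at most $\Delta(H)$ colors are blocked, exactly $d$ colors are guaranteed free, which suffices only because $G$ can be edge-colored with $d = \Delta(G)$ colors, i.e.\ is class 1; were $G$ class 2 it would require $d+1$ colors while as few as $d$ might be available (when $\deg_H(v) = \Delta(H)$), and the extension would fail. One should also confirm carefully the uniformity of $B_v$ across each $G$-layer, since without coloring all $H$-layers identically the forbidden sets would vary from vertex to vertex and the clean \emph{relabel $\alpha$} step would no longer apply.
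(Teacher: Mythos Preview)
Your proof is correct and follows essentially the same approach as the paper: both color all $H$-layers identically by a fixed $(\Delta(H)+1)$-coloring and then, for each $G$-layer indexed by $v$, relabel a fixed $\Delta(G)$-coloring of $G$ into colors avoiding the palette $P_\beta(v)$. The only difference is that the paper writes down one explicit relabeling (shift all colors but one by $\Delta(H)$ and send the remaining class to the color missing from $P_h(z_2)$), since that specific formula is reused in later proofs.
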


\begin{proof} 
Note first that $\Delta(G \Box H) = \Delta(G) + \Delta(H)$.  

Let $g: E(G) \rightarrow [\Delta(G)]$ be an edge coloring of $G$ and
$h: E(H) \rightarrow [\Delta(H)]$ 
(resp.  $h: E(H) \rightarrow [\Delta(H)+1]$)  an edge coloring of $H$
if $H$ is class 1 (resp. class 2). 

We construct an edge coloring $f$ with $\Delta(G)+ \Delta(H)$  color 
as follows. 

For every $x_2y_2 \in E(H)$ and every $z_1 \in V(G)$ we set $f((z_1,x_2)(z_1,y_2)) := h(x_2y_2)$.
If $H$ is class 1,  then  for every $x_1y_1 \in E(G)$ and every $z_2 \in V(H)$ we set 
$f((x_1,z_2)(y_1,z_2)) := g(x_1y_1) + \Delta(H)$.
Note that $f$ is clearly a proper edge coloring of $G \Box H$ with  $\Delta(G) + \Delta(H)$ colors.

If $H$ is class 2, then we obtain $f$ by choosing first an arbitrary color $c \in [\Delta(G)]$. 
Then for every $x_1y_1 \in E(G)$ and every $z_2 \in V(H)$ we set:  

   $f((x_1,z_2)(y_1,z_2)) := g(x_1y_1) + \Delta(H)$,   if  $g(x_1y_1) \not = c$,  

   $f((x_1,z_2)(y_1,z_2)) := c'$,   if  $g(x_1y_1) = c$,  where $c' \in [\Delta(H)+1] \setminus P_h(z_2)$.

Note that $c'$ always exists since $|P_h(z_2)| \le \Delta(H)$.  

We can see that $f$ is  a proper edge coloring of $G \Box H$ with  $\Delta(G) + \Delta(H)$ colors.
It follows that  $G \Box H$ is class 1.
\end{proof}

It will be needed in the sequel that, if in the proof of Theorem \ref{mah} we construct an edge coloring $f$ by choosing $c = \Delta(G)$, then $f: E(G \Box H) \rightarrow [\Delta(G) + \Delta(H)]$ is obtained.
 
 
 \section{Palette index of Cartesian products of regular and class 1  nearly regular graphs }

For graphs $G$ and $H$, it is shown in \cite{Smbatyan2} that the palete index  of $G \Box H$ is bounded above by the product of the palete indices of both factor graphs.
\begin{prop1} \label{bound}
If $G$ and $H$ are graphs, then $\check s( G \Box H) \le \check s(G) \check s(  H).  $
\end{prop1}

Considering Cartesian products of regular graphs, the following  observation from \cite{Bonvicini} is in noteworthy. 

\begin{prop1} \label{regular}
If $G$ is an $r$-regular graph 
then  $\check s(G) \le r+1$.
\end{prop1}

We will show in the sequel that for the Cartesian products of regular and some related graphs  the above upper bounds can be significantelly improved. 

Notice that the Cartesian product of two  regular graphs is clearly a regular graph. Thus, 
Theorem \ref{mah} and Proposition \ref{hornak} yield the following corollary. 

\begin{cor1} \label{c1}
Let $G$ and $H$ be regular graphs.  If $G$ is class 1 nontrivial graph,  then $\check s( G \Box H) = 1$. 
\end{cor1}


Let  $G'$ be a connected $r$-regular class 1 nontrivial graph and $G$ a spanning  subgraph of $G'$.
We say that $G$ is a {\em class 1  nearly regular graph (derived from $G'$)} or shortly {\em NRG}  if 
$G'$ admits a perfect matching $M$ such that  $G' - M$ is class 1  and $G = G' - X$,
where  $X \subset M$.

Let $g: E(G) \rightarrow [r]$ be an edge coloring of a graph $G$ and let $C^g_1, \ldots, C^g_r$ be the 
corresponding  color classes.   
Note that if $G$ is NRG derived from a $r$-regular graph $G'$,  then for every $j \in [r]$ there exists  
 an $r$-edge coloring $g$ of $G'$, such that $G = G' - X$, where  $X \subset C^g_j$. 

\begin{thm} \label{nrg}
Let $H$ be a connected regular graph.  If $G$ is NRG,  then $\check s(G \Box H) = 2$. 
\end{thm}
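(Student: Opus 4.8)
The plan is to prove the two inequalities $\check s(G \Box H) \ge 2$ and $\check s(G \Box H) \le 2$ separately. Throughout, write $r$ for the regularity of the graph $G'$ from which $G$ is derived, and let $H$ be $s$-regular. The lower bound is the quick half: since $G = G' - X$ with $\emptyset \neq X \subsetneq M$, the graph $G$ has at least one vertex of degree $r$ (an endpoint of an edge of $M \setminus X$, which is nonempty as $X \neq M$) and at least one vertex of degree $r-1$ (an endpoint of an edge of $X$, which is nonempty). Because $H$ is $s$-regular, each vertex $(x,y)$ of $G \Box H$ has degree $\deg_G(x) + s$, so $G \Box H$ has vertices of the two distinct degrees $r+s$ and $r-1+s$; Observation \ref{observation} then yields $\check s(G \Box H) \ge 2$.

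It remains to construct a proper edge coloring of $G \Box H$ with exactly two palettes. First I would invoke the remark made just before the statement: I may choose an $r$-edge-coloring $g$ of $G'$ with $X \subseteq C^g_j$ for any color $j \in [r]$ that I like, and restrict it to $G$. Under this coloring every degree-$r$ vertex of $G$ sees all of $[r]$, while every degree-$(r-1)$ vertex misses exactly the color $j$ of its deleted $M$-edge. Since $\chi'(G) = r$, I then feed $g$ together with a $\chi'(H)$-coloring $h$ of $H$ into the construction in the proof of Theorem \ref{mah}, producing a proper coloring $f$ of $G \Box H$ with $r+s$ colors. The two palettes will emerge by reading off, at each vertex $(x,y)$, the colors contributed through $y$ by the $H$-layer and through $x$ by the $G$-layer.

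If $H$ is class 1 this is essentially automatic: every vertex of $H$ realizes the full palette, so $(x,y)$ receives the full palette $[r+s]$ whenever $\deg_G(x) = r$ and the palette $[r+s] \setminus \{s+j\}$ whenever $\deg_G(x) = r-1$. These are the only two palettes, for any value of $j$, and the claim follows.

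The class 2 case is the main obstacle, and it is here that the choice of $j$ becomes decisive. Now $h$ uses $s+1$ colors and each vertex $y$ of $H$ misses a color $m(y)$ that genuinely depends on $y$; were this dependence to propagate to the degree-$(r-1)$ vertices of $G \Box H$, it would manufacture many palettes instead of two. To cancel it I would run the construction of Theorem \ref{mah} with the distinguished color $c = \Delta(G) = r$, so that (by the remark following that theorem) $f$ still uses only the colors $[r+s]$, while aligning the deletion with a \emph{different} class by taking $j \neq r$ --- which is possible since a genuine NRG forces $r \ge 2$. The point is that the $g$-color-$r$ edge, which is present at every vertex of $G$ (a degree-$(r-1)$ vertex has lost only its color-$j$ edge with $j \neq r$), is exactly the edge that the construction recolors to the locally missing $H$-color $m(y)$; hence $m(y)$ is supplied at every vertex and its variation cancels against the $H$-layer. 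The degree-$(r-1)$ vertices then all uniformly miss the single fixed color $s+1+j$ carried by the deleted class, while the degree-$r$ vertices are full, so exactly two palettes occur. Combining the two cases gives $\check s(G \Box H) \le 2$, and with the lower bound, equality. I expect the bookkeeping of the class 2 palettes --- verifying that the $m(y)$-terms really do cancel at both degree types --- to be the only delicate step.
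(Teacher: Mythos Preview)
Your argument is correct and runs along the same lines as the paper: both feed an $r$-coloring $g$ of $G'$ and a $\chi'(H)$-coloring $h$ of $H$ into the Theorem~\ref{mah} construction with distinguished color $c=r$, obtain a single-palette coloring of $G'\Box H$, and then restrict to $G\Box H$. The one substantive difference is where the deleted set sits: the paper takes $X\subset C^g_r$, aligning the deleted class with $c$, whereas you deliberately take $X\subset C^g_j$ with $j\neq r$. In the class~1 case either choice works. In the class~2 case your choice is the one that makes the argument go through: with $j\neq r$ every vertex of $G$ retains its $g$-color-$r$ edge, which is precisely the edge recolored to the missing $h$-color $m(y)$, so $m(y)$ is supplied at every vertex and the small palette is the fixed set missing only the shifted color corresponding to $j$. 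Under the paper's choice $j=r$, a degree-$(r{-}1)$ vertex $(x,y)$ has lost exactly this recolored edge, so its palette is $[r+r']\setminus\{m(y)\}$, which genuinely varies with $y$ whenever $H$ is class~2; the paper's assertion that this palette equals $[r+r'-1]$ does not hold as written. Your separation of $j$ from $c$ is therefore not cosmetic but the step that secures the two-palette count in the class~2 case.
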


\begin{proof}
Since $G$ is not a regular graph,  we have $\check s(G \Box H) \ge 2$.  We will contruct a proper edge  coloring $f$ 
of $G \Box H$ with two distinct palettes.

Suppose that $H$ is a $r'$-regular graph, while 
$G$ is derived from  a class 1 $r$-regular graph  $G'$.  
It follows that there exists  an edge coloring $g: E(G') \rightarrow [r]$  of $G'$ 
 such that $G = G' - X$,  where  $X \subset C^g_r$.

Remind that by Corollary \ref{c1},  it holds that $G' \Box H$ is class 1.  

 Let $h: E(H) \rightarrow [r']$ (resp. $h: E(H) \rightarrow [r' + 1]$) be an edge coloring of $H$ 
 if $H$ is class 1 (resp. $H$ is class 2).   
 Since  $G'$ is class 1,  we can construct a 
 $(r + r')$-edge coloring  $f$ of $G' \Box H$  as we shown in the proof of Theorem \ref{mah}.  
 That is to say,  if $H$ is class 2, then for every $x_1y_1 \in E(G)$ and every $z_2 \in E(H)$ we 
 set: 
 
    $f((x_1,z_2)(y_1,z_2)) := g(x_1y_1) + r'+1$,   if  $g(x_1y_1) \not = r$,  

   $f((x_1,z_2)(y_1,z_2)) := c'$,   where $c' \in [r'+1] \setminus P_h(z_2)$,  if  $g(x_1y_1) = r$.

\noindent
It follows that  the palette of every vertex of  $G' \Box H$ with respect to $f$ equals $[r + r']$. 
Moreover,  $f$ restricted to  $G \Box H$ admits  two palettes: 

  $[r + r']$, for every vertex of degree $r + r'$; and 
 
  $[r + r' - 1]$,  for every vertex of degree $r + r' -1$,  i.e., a vertex incident to an edge of $X$ in $G' \Box H$.  
 
\noindent 
Since we showed that for every connected regular graph $H$ and  class 1  nearly regular graph $G$  
 we can always found an edge coloring of $G \Box H$ with  
two palettes,    it follows that $\check s(G \Box H) = 2$.
\end{proof}

If a connected regular graph $G$ is class 1,  then notice that $G-e$ is NRG for every  $e \in E(G)$. 
This observation provides the following corollary to Theorem \ref{nrg}. 

\begin{cor1} \label{c2}
Let $H$ and $G$ be connected regular graphs.  If $G$ is class 1 nontrivial graph and $e \in E(G)$,  then $\check s((G - e)  \Box H) = 2$. 
\end{cor1}

\begin{figure}[!ht]
	\centering
		\includegraphics[width=8cm]{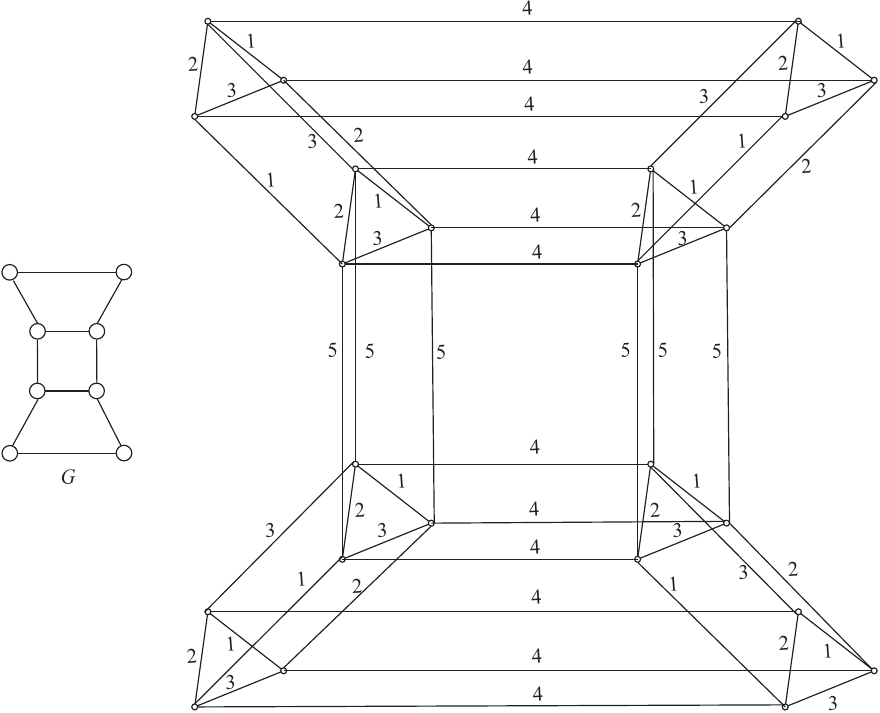}
	\caption{An edge coloring of  a subgraph of $G \Box C_3$ with 2 palettes, where $G$ is a NRG derived from $Q_3$}
\label{Q}
\end{figure}

A little more involved application of Theorem \ref{nrg} considers the well known class of hypecube graphs known as 
$r$-cubes. 
Remind that the vertex set of the {\em $r$-cube} $Q_r$ consists of all $r$-tuples $b_1\ldots b_r$,  $b_i \in \{0,  1 \}$.
Two vertices of  $Q_r$ are adjacent if corresponding $r$-tuples differ in precisely one coordinate.   
Note that $Q_1 = K_2$,  while  for $r\ge 2$ we have  $Q_r = Q_{r-1} \Box  K_2$. 
It is not difficult to see that $Q_r$ is class 1.

A subgraph $H$ of a graph $G$ is {\em isometric} if $d_H(u,v)=d_G(u,v)$ for any pair of vertices $u$ and $v$ from $H$.
Isometric subgraphs of hypercubes are called {\em partial cubes}.

Let $\alpha : V(G) \rightarrow V(Q_r)$ be an isometric embedding of $G$ into the $r$-cube,  i,e.,  for every 
$u, v \in V(G)$ we have $d_G(u,v)=d_{Q_r}(\alpha(u),\alpha(v))$.
We will denote the $i$-th coordinate of $\alpha$ with $\alpha_{i}$, i.e. $\alpha = (\alpha_{1}, \alpha_{2}, \ldots, \alpha_{r})$.

By the definition,  end-vertices of an arbitrary edge of $Q_r$ differ exactly in coordinate $i$ 
for some $i \in [r]$. 
Let $G$ be a partial cube with an isometric embedding $\alpha : V(G) \rightarrow V(Q_r)$.
The set of all edges $uv \in E(G)$ satisfing the condition $\alpha_i(u) \not =  \alpha_i(v)$
 is denoted by $E_i$,  more formaly: $E_i = \{uv \,  |  \, uv \in E(G),  \,  \alpha_i(u) \not =  \alpha_i(v) \}$.
  (These sets are known as classes of the equvalence relation $\Theta$,  see \cite{imkl-00} for the details.) 

Clearly,  the sets  $E_1, E_2,  \ldots, E_r$  partition the set of edges of $G$.  Moreover, 
the function $f: E(G) \rightarrow [r]$,  where $f(e)=i$ for every $e \in E_i$,  is a proper edge coloring  of 
$G$.  Thus, it is not difficult to see that the following result holds. 
 
 \begin{prop1} \label{kocka}
Let  $G$ be a partial cube  with an isometric embedding $\alpha : V(G) \rightarrow V(Q_r)$,  $r \ge 2$, 
and $H$ a regular graph.  If $X \subset E_i$,   $i\in [r]$,  where $E_i = \{uv \,  |  \, uv \in E(G),  \,  \alpha_i(u) \not =  \alpha_i(v) \}$,
then $$\check s((G - X) \Box H ) = 2.$$
 \end{prop1}
 
 As an example to Proposition \ref{kocka} consider an edge coloring of a subgraph of $Q_3 \Box C_3$ isomorphic 
 to  $G \Box C_3$,  where $G$ is  a NRG derived from $Q_3$.  The edge coloring $f$  of
 $G \Box C_3$ is constructed with respect to the proof of Theorem \ref{nrg}, where 
 $h: E(C_3) \rightarrow [3]$  and $g: E(Q_3) \rightarrow \{ 4, 5, 6\}$.
 
  \section{Palette index of Cartesian products with a path and cycle }

To establish the palette index for a path, note that $P_n$ admits a perfect matching if and only if $n$ is even.
\begin{prop1}
Let $n \ge 3$. Then
\begin{displaymath}
\check s(P_n) =
        \left \{ \begin{array}{llll}
             2, &  n  \; {\rm even}   \\
              3, &  n  \; {\rm odd}   \\
             \end{array}. \right.
\end{displaymath}
 \end{prop1}

The palette index for a cycle follows from Propositions \ref{hornak} and \ref{regular}.
\begin{prop1}
Let $n \ge 3$. Then
\begin{displaymath}
\check s(C_n) =
        \left \{ \begin{array}{llll}
             1, &  n  \; {\rm even}   \\
              3, &  n  \; {\rm odd}   \\
             \end{array}. \right.
\end{displaymath}
 \end{prop1}
 
 The palette index of the Cartesian product of two paths is presented  \cite{Casselgren}.
 
 \begin{thm} \label{tpp}
Let $s,t \ge 2$. Then 
\begin{displaymath}
\check s(P_s \Box P_t) =
        \left \{ \begin{array}{llll}
             1, &   s  = t = 2  \\
             2, &   { \rm min( } s, t) = 2,  \; {\rm max(} s,t) \ge 3   \\
             3, &   s, t \ge 3  \; {\rm and \; s\cdot t \;  is \;  even }   \\
             5, &   s, t \ge 3  \; {\rm and \; s\cdot t \;  is \;  odd}   \\
             \end{array}. \right.
\end{displaymath}
 \end{thm}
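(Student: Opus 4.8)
The plan is to treat the four regimes separately, establishing each lower bound (mostly from the degree sequence) and then matching it with an explicit colouring, saving the genuinely hard case $s,t\ge 3$ both odd for last.

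For the lower bounds I would start from Observation \ref{observation}. Since the degree of $(i,j)$ in $P_s\Box P_t$ is $\deg_{P_s}(i)+\deg_{P_t}(j)$, the occurring degrees are: only $2$ when $s=t=2$ (the graph is $C_4$); exactly $\{2,3\}$ when $\min(s,t)=2$ and $\max(s,t)\ge 3$; and exactly $\{2,3,4\}$ when $s,t\ge 3$. Observation \ref{observation} then yields $\check s\ge 1,\ge 2,\ge 3$ in the first three regimes, and also $\ge 3$ in the fourth. The case $s=t=2$ is immediate: $C_4$ is a class $1$ regular graph, so $\check s(C_4)=1$ by Proposition \ref{hornak}. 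For $\min(s,t)=2$, say $s=2$, I would exhibit the explicit ladder colouring on $\{1,2,3\}$: colour both rails by the alternating pattern $1,2,1,2,\dots$ (identically on the top and bottom rail), colour every interior rung $3$, and colour each of the two end rungs with the element of $\{1,2\}$ missing at the adjacent corner. A direct check shows every interior vertex then has palette $\{1,2,3\}$ and all four corners have palette $\{1,2\}$, so only two palettes occur and $\check s=2$.

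For $s,t\ge 3$ with $st$ even, say $t$ even, the target is a colouring with exactly three palettes: $\{1,2,3,4\}$ on the interior, $\{1,2,3\}$ on every boundary vertex of degree $3$, and a common palette $\{1,2\}$ on the four corners. The key idea is to make colour $4$ appear \emph{only} between interior vertices, i.e.\ to let its colour class be a perfect matching $M_4$ of the $(s-2)\times(t-2)$ interior subgrid; such a matching exists precisely because $t-2$ is even. Deleting $M_4$ leaves a bipartite graph $G'$ of maximum degree $3$, which is class $1$ by König's theorem and hence properly $3$-edge-colourable. Every degree-$3$ vertex of $G'$ then automatically sees all of $\{1,2,3\}$, and reattaching $M_4$ with colour $4$ gives each interior vertex palette $\{1,2,3,4\}$. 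It remains to check that the matching and the $3$-colouring can be chosen so that all four corners receive $\{1,2\}$ (corners carry no $M_4$-edge, so this is a constraint only on the two incident $G'$-edges); I would arrange this using the reflective symmetries of the grid, after which exactly three palettes occur and $\check s=3$.

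The crux is $s,t\ge 3$ both odd, where $\check s=5$. Now $(s-2)(t-2)$ is odd, so the interior subgrid has no perfect matching and the ``hide colour $4$ inside'' construction of the previous paragraph breaks down; this parity failure is precisely what forces the jump from $3$ to $5$. For the upper bound I would repair the construction using a near-perfect matching of the interior, spending extra palettes to absorb the single uncovered interior vertex and the resulting boundary defects, and tuning the remaining freedom so that exactly $5$ palettes appear. The hard part, and the main obstacle of the whole theorem, is the matching lower bound $\check s\ge 5$: assuming a colouring with at most four palettes, one argues that the shared degree-$3$ palette and the shared corner palette rigidify the edge-colours along each boundary row and column (consecutive boundary edges must lie in the common $3$-set, which forces the spoke colours and propagates all the way around the border), and then a parity count of how the two corner colours alternate around the boundary cycle of length $2(s+t-2)$ is consistent only when at least one of $s,t$ is even. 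Since both are odd this is contradictory, ruling out $3$ and $4$ palettes and giving $\check s\ge 5$.
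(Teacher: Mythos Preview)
The paper does not prove Theorem~\ref{tpp}; it is quoted from \cite{Casselgren} without argument. There is therefore no proof in the paper to compare against, and I can only comment on the internal soundness of your plan.

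The first two regimes are handled correctly. In the third regime your matching-plus-K\"onig idea is sound, but ``I would arrange this using the reflective symmetries of the grid'' does not establish that all four corners receive the \emph{same} $2$-palette. K\"onig guarantees some proper $3$-edge-colouring of $G'$, in which the four degree-$2$ corners could a priori realise two or three different $2$-subsets of $\{1,2,3\}$; invoking symmetry does not help unless both $M_4$ and the colouring are chosen invariantly, which you have not arranged. This is repairable (for instance by an explicit construction, or by Kempe-chain recolouring starting from an arbitrary $3$-colouring), but it is work you have not done.

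The real gap is the lower bound $\check s\ge 5$ when $s,t\ge 3$ are both odd. Your parity argument, as written, presupposes a \emph{single} palette for the degree-$3$ vertices and a \emph{single} palette for the corners. That hypothesis is automatic when $\check s=3$, and your boundary-propagation sketch is then on the right track. But to reach $\check s\ge 5$ you must also exclude $\check s=4$, and with four palettes one of the three degree classes may carry two palettes. If the degree-$3$ boundary vertices split between two $3$-sets, consecutive boundary edges need no longer lie in a common $3$-set, the spoke colours are not forced, and your alternation count around the boundary cycle never gets started; the same failure occurs if the corners split between two $2$-sets. Excluding four palettes is the heart of the argument in \cite{Casselgren} and requires a substantially longer case analysis than your sketch indicates.
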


The palette index of the Cartesian product of a path and cycle is studied in \cite{Smbatyan} where 
 the following partial result is presented.
 
\begin{prop1} \label{ppc}
Let $s,t \ge 3$. 

(i) If $s$ and $t$ are both odd,  then $\check s(C_s \Box P_t) = 4$.

(ii) If $s$ is even,  then $\check s(C_s \Box P_t) = 2$.

 \end{prop1}

 \begin{figure}[!ht]
	\centering
		\includegraphics[width=12cm]{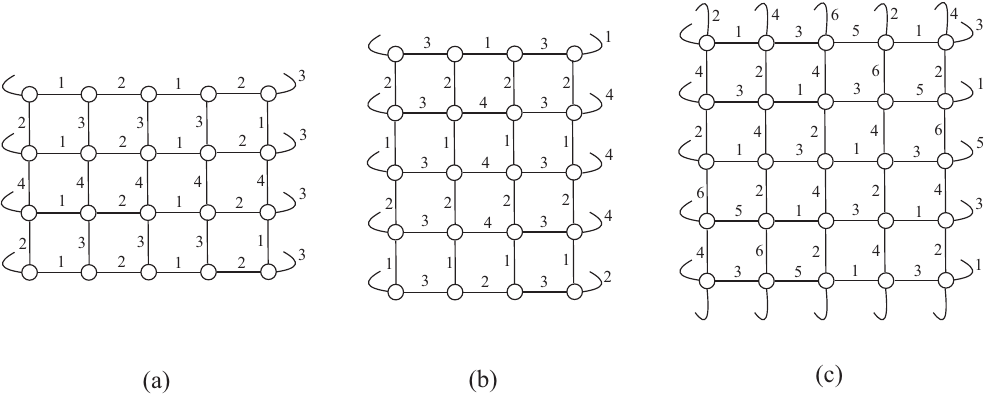}
	\caption{Edge colorings of:  (a) $C_5 \Box P_4$ with two palettes, 
	(b) $C_4 \Box P_5$ with two palettes
(c) $C_5  \Box  C_5$ with three palettes.}  
\label{pc}
\end{figure}

\begin{thm} \label{tpc}
Let $s,t \ge 3$. Then 
\begin{displaymath}
\check s(C_s \Box P_t) =
        \left \{ \begin{array}{llll}
             4, &   s  \; { \rm and \;  } t  \; {\rm are \; both \; odd}   \\
             2, & { \rm otherwise }     
             \end{array}. \right.
\end{displaymath}
\end{thm}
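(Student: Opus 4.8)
The plan is to assemble the full statement from two results already in hand: Proposition \ref{ppc}, which disposes of the cases where $s$ and $t$ are both odd and where $s$ is even, and Theorem \ref{nrg}, which I will use to close the one remaining case, namely $s$ odd and $t$ even. First I would record the degree structure of $C_s \Box P_t$: a vertex $(x,y)$ has degree $4$ when $y$ is an interior vertex of $P_t$ and degree $3$ when $y$ is an endpoint of $P_t$. Thus $C_s \Box P_t$ has exactly two distinct vertex degrees, so by Observation \ref{observation} we have $\check s(C_s \Box P_t) \ge 2$ for all $s,t \ge 3$; this explains why $2$ (and not $1$) is the correct value in the non-``both odd'' cases, although the exact values I need are in any case packaged inside the cited results.

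The two cases covered by Proposition \ref{ppc} are immediate. If $s$ and $t$ are both odd, part (i) gives $\check s(C_s \Box P_t) = 4$. If $s$ is even, part (ii) gives $\check s(C_s \Box P_t) = 2$, regardless of the parity of $t$. The only branch not yet settled is $s$ odd and $t$ even, and this is where the nearly regular machinery enters.

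The key observation is that, for even $t$, the path $P_t$ is itself an NRG, derived from the even cycle $C_t$. Writing $V(C_t) = \{0,1,\dots,t-1\}$ with the natural edges, the set $M = \{\{0,1\},\{2,3\},\dots,\{t-2,t-1\}\}$ is a perfect matching of $C_t$, and its complement $C_t - M$ is again a perfect matching, hence $1$-regular and therefore class $1$; moreover $C_t$ is a connected, nontrivial, $2$-regular, class $1$ graph precisely because $t$ is even. Choosing $e = \{0,1\} \in M$ gives $P_t = C_t - \{e\}$ with $\{e\} \subset M$, so $P_t$ satisfies the definition of a class $1$ nearly regular graph derived from $C_t$. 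Since $C_s$ is a connected regular (indeed $2$-regular) graph, Theorem \ref{nrg} applied with $G = P_t$ and $H = C_s$ yields $\check s(P_t \Box C_s) = 2$, and by commutativity of the Cartesian product $\check s(C_s \Box P_t) = 2$. Concretely, the construction in the proof of Theorem \ref{nrg} produces the two palettes $\{1,2,3,4\}$ on the degree-$4$ vertices and $\{1,2,3\}$ on the degree-$3$ vertices sitting over the endpoints of the path, as illustrated in Fig. \ref{pc}(a),(b).

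The only genuine content beyond invoking the cited results is recognizing that an even path is an NRG, and the main (minor) obstacle is simply verifying the three conditions in the NRG definition for the pair $(C_t, M)$ — that $C_t$ is regular, connected, and class $1$, that $C_t - M$ is class $1$, and that the deleted edge lies in $M$ — all of which hold exactly because $t$ is even. Everything else is bookkeeping over the four parity cases, which together exhaust the ``otherwise'' branch of the statement.
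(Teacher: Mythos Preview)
Your proof is correct and follows essentially the same route as the paper: the lower bound and the two cases handled by Proposition~\ref{ppc} are cited directly, and for the remaining case ($t$ even) you verify that $P_t$ is an NRG derived from $C_t$ and invoke Theorem~\ref{nrg}, which is precisely what the paper does via its immediate consequence Corollary~\ref{c2}. The only cosmetic difference is that you spell out the NRG verification rather than citing the corollary, and your reference to Fig.~\ref{pc}(b) is slightly off (that subfigure illustrates the $s$ even case, not the $t$ even case you are treating).
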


\begin{proof}
Clearly,  $\check s(C_s \Box P_t) \ge 2$ for every $s$ and $t$.  
With respect to Proposition \ref{ppc},  we have to confirm the result for every even $t$, i.e., 
to show that for every even integer $t$ and every $s$ there exists an edge coloring of $C_s \Box P_t$ 
with two palettes.  
Since the existence of a proper construction clearly follows from 
Corollary \ref{c2} (see an example in the left-hand side of Fig.  \ref{pc}),  
the proof is complete. 
\end{proof}

Notice also an example of an edge coloring of $C_4 \Box P_5$ with two palettes in 
 Fig.  \ref{pc}  (b) which can be generalized as we will show in Section 5.

In the rest of this section we consider the palette index of Cartesian products of two cycles.

Let  $G$ be a graph.  An {\em even cycle decomposition} of $G$ of size $k$ 
is a partition ${\cal E} = \{  \mathcal{E}_0,\mathcal{E}_1 \ldots,\mathcal{E}_{k-1} \}$
of the edge-set of $G$ such that the edges of $ \mathcal{E}_i$ compose disjoint even cycles of $G$.  
The studies presented in this section were inspired by  the results of Bonvicini and Mazzuoccolo \cite{Bonvicini2},  who showed 
that if a 4-regular graph $G$ admits palette index 3,  then
$G$ has an even cycle decomposition of size 3 or an even 2-factor.

Let $s \ge t \ge 3$  be  integers and $j \in [s]_0$,  $k \in [t]_0$,   
Let us define two types of vertical edges of $C_s \Box C_t$:

 $v_{j,k}^+ := (j,k)(j,(k+1) \, { \rm mod } \, t)$  (an ``ascending'' vertical edge),

 $v_{j,k}^- := (j,k)(j,(k-1) \,  {\rm mod } \, t)$  (a ``descending'' vertical edge);
 
 \noindent
 and a horizontal edge 
 
  $h_{j,k} := (j,k)((j+1) \, {\rm  mod } \, s,k)$.

The vertex $(j,k)$ is called the {\em initial vertex} of $v_{j,k}^+$, $v_{j,k}^-$ and $h_{j,k}$. 
The other end-vertex (i.e.  not initial) of a vertical or horizontal edge is called the {\em terminal vertex}.  
  

Let $\ell :=  \frac{s-t}{2}$  mod $t$ and $h :=  \lfloor \frac{s-t}{2t} \rfloor$.
 We will construct a partition of the edge set of $C_s \Box C_t$ based on the following sets:  
 $$Z_i^{s,t} = Z_{i,1}^{s,t}  \cup  Z_{i,2}^{s,t} \cup Z_{i,3}^{s,t}, \,i \in [t]_0$$ such that
 $$Z_{i,1}^{s,t} = \{ v_{j,i+j}^+,  h_{j,i+j+1} \, | \,  j \in [\ell ]_0 \},$$ 
$$ Z_{i,2}^{s,t} = \{ v_{j+\ell,i-j+\ell}^-,  h_{j+\ell,i-j+\ell-1} \, | \,  j \in [\ell]_0 \},$$
$$Z_{i,3}^{s,t} = \{ v_{j+2\ell,i-j}^-,  h_{j+2\ell,i-j-1} \, | \,  j \in [t(2h+1)]_0 \},$$
where  additions and substraction in the second coordinate are performed modulo $t$. 

Consider for example $Z_{2}^{13,5}$ depicted in Fig.  \ref{decomposition}, where the edges of this set are 
drawn with dashed lines.  Note that the initial vertex with the smallest first coordinate  of $Z_{2,1}^{13,5}$ 
is $(0,2)$,  while its  counterparts in $Z_{2,2}^{13,5}$ and $Z_{2,3}^{13,5}$ are 
$(4,1)$ and $(8,2)$, respectively. 

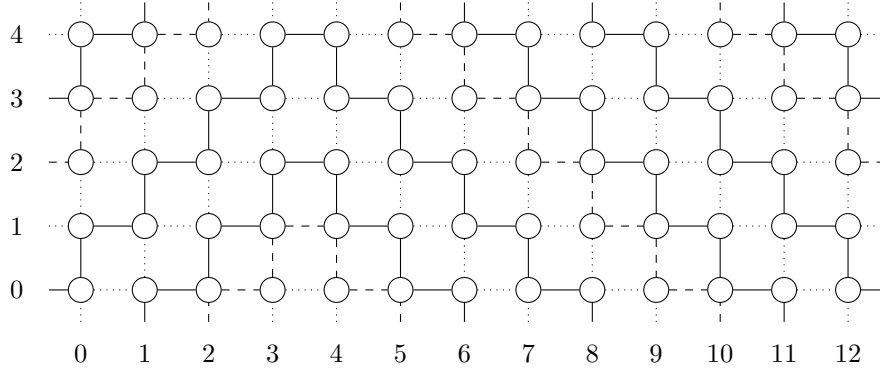
\begin{figure}[bt] 
\centering
\begin{tikzpicture}[scale=0.85]  \label{decomposition}.

  \foreach \a in {0, 1, ..., 12}
     \foreach \b in {0, 1, ..., 4 } {
         \node[black, circle, draw] (p{\a}d{\b}) at ({\a+1},{\b+1}) {};
     }  
    
\draw (p{0}d{0}) 
\foreach \a in {p{0}d{1},p{1}d{1},p{1}d{2},p{2}d{2},p{2}d{3},p{3}d{3},p{3}d{4},p{4}d{4}} 
{ -- (\a)};

\draw (p{4}d{4})
\foreach \a in {p{4}d{3},p{5}d{3},p{5}d{2},p{6}d{2},p{6}d{1},p{7}d{1},p{7}d{0},p{8}d{0}} 
{ -- (\a)};

\draw (p{8}d{4})
\foreach \a  in {p{9}d{4},p{9}d{3},p{10}d{3},p{10}d{2},p{11}d{2},p{11}d{1},p{12}d{1},p{12}d{0}} { -- (\a)};
\draw (0.5,1) -- (0.8,1); \draw (13.2,1) -- (13.5,1);  
\draw (9,0.5) -- (9,0.8); \draw (9,5.5) -- (9,5.2); 

\draw [dotted] (0.5,2) -- (0.8,2); \draw [dotted] (13.2,2) -- (13.5,2);  
\draw [dotted] (p{0}d{1}) 
 -- (p{0}d{2}) -- (p{1}d{2}) -- (p{1}d{3}) -- (p{2}d{3}) -- (p{2}d{4}) -- (p{3}d{4}); 
\draw [dotted] (4,0.5) -- (4,0.8); \draw [dotted] (4,5.5) -- (4,5.2); 
\draw [dotted] (5,0.5) -- (5,0.8); \draw [dotted] (5,5.5) -- (5,5.2); 
\draw [dotted] (p{3}d{0}) -- (p{4}d{0}); 
\draw [dotted] (p{4}d{4}) 
 -- (p{5}d{4})  -- (p{5}d{3})  -- (p{6}d{3}) 
 -- (p{6}d{2})  -- (p{7}d{2})  -- (p{7}d{1}) -- (p{8}d{1}); 
\draw [dotted] (p{8}d{1}) -- (p{8}d{0})  -- (p{9}d{0}); 
\draw [dotted] (10,0.5) -- (10,0.8); \draw [dotted] (10,5.5) -- (10,5.2); 
\draw [dotted] (p{9}d{4})
\foreach \a  in {p{10}d{4},p{10}d{3},p{11}d{3},p{11}d{2},p{12}d{2},p{12}d{1}} { -- (\a)};

\draw [dashed] (0.5,3) -- (0.8,3); \draw [dashed] (13.2,3) -- (13.5,3);  
\draw [dashed] (p{0}d{2})
\foreach \a  in {p{0}d{3},p{1}d{3},p{1}d{4},p{2}d{4}} { -- (\a)};
\draw [dashed] (3,0.5) -- (3,0.8); \draw [dashed] (3,5.5) -- (3,5.2); 
\draw [dashed] (6,0.5) -- (6,0.8); \draw [dashed] (6,5.5) -- (6,5.2); 
\draw [dashed] (p{2}d{0})
\foreach \a  in {p{3}d{0},p{3}d{1},p{4}d{1},p{4}d{0},p{5}d{0}} { -- (\a)};
\draw [dashed] (p{5}d{4})
\foreach \a  in {p{6}d{4},p{6}d{3},p{7}d{3},p{7}d{2},p{8}d{2},
p{8}d{1},p{9}d{1},p{9}d{0},p{10}d{0}} { -- (\a)};
\draw [dashed] (11,0.5) -- (11,0.8); \draw [dashed] (11,5.5) -- (11,5.2); 
\draw [dashed] (p{10}d{4})
\foreach \a in {p{11}d{4},p{11}d{3},p{12}d{3}, p{12}d{2}} { -- (\a)};

\draw (0.5,4) -- (0.8,4); \draw  (13.2,4) -- (13.5,4);  
\draw  (p{0}d{3})
\foreach \a  in {p{0}d{4},p{1}d{4}} { -- (\a)};
\draw (2,0.5) -- (2,0.8); \draw  (2,5.5) -- (2,5.2); 
\draw  (7,0.5) -- (7,0.8); \draw  (7,5.5) -- (7,5.2); 
\draw (p{1}d{0})
\foreach \a  in {p{2}d{0},p{2}d{1},p{3}d{1},p{3}d{2},p{4}d{2},
p{4}d{1},p{5}d{1},p{5}d{0},p{6}d{0}} { -- (\a)};
\draw (p{6}d{4})
\foreach \a  in {p{7}d{4},p{7}d{3},p{8}d{3},p{8}d{2},p{9}d{2},p{9}d{1},p{10}d{1},p{10}d{0},p{11}d{0}} { -- (\a)};
\draw  (12,0.5) -- (12,0.8); \draw  (12,5.5) -- (12,5.2); 
\draw (p{11}d{4})
\foreach \a  in {p{12}d{4},p{12}d{3}} { -- (\a)};

\draw [dotted] (0.5,5) -- (0.8,5); \draw [dotted] (13.2,5) -- (13.5,5);  
\draw [dotted] (1,0.5) -- (1,0.8); \draw [dotted] (1,5.5) -- (1,5.2); 
\draw [dotted] (p{7}d{4})
\foreach \a  in {p{8}d{4},p{8}d{3},p{9}d{3},p{9}d{2},p{10}d{2},p{10}d{1},
p{11}d{1},p{11}d{0},p{12}d{0}} { -- (\a)};
\draw [dotted] (13,0.5) -- (13,0.8); \draw [dotted] (13,5.5) -- (13,5.2); 
\draw [dotted] (p{0}d{0})
\foreach \a  in {p{1}d{0},p{1}d{1},p{2}d{1},p{2}d{2},p{3}d{2},
p{3}d{3}, p{4}d{3}, p{4}d{2},p{5}d{2},p{5}d{1},p{6}d{1},p{6}d{0},p{7}d{0}} 
{ -- (\a)};
\draw [dotted] (8,0.5) -- (8,0.8); \draw [dotted] (8,5.5) -- (8,5.2); 

\draw (0,1) node {0}; \draw (0,2) node {1}; \draw (0,3) node {2}; \draw (0,4) node {3}; \draw (0,5) node {4};

\draw (1,0) node {0}; \draw (2,0) node {1}; \draw (3,0) node {2}; \draw (4,0) node {3};
\draw (5,0) node {4}; \draw (6,0) node {5}; \draw (7,0) node {6}; \draw (8,0) node {7};
\draw (9,0) node {8}; \draw (10,0) node {9}; \draw (11,0) node {10}; \draw (12,0) node {11};
\draw (13,0) node {12};

\end{tikzpicture}

\caption{An even cycle decomposition of  $C_{13} \Box C_{5}$}
\end{figure}

\begin{prop1} \label{part}
If $s \ge t \ge 3$ are odd integers,  then 
$\{ Z_0^{s,t}, Z_1^{s,t}, \ldots,   Z_{t-1}^{s,t}\} $  partition the set of edges of
$C_s \Box C_t$. 
\end{prop1}

\begin{proof}
Note that  the first coordinates of vertices from the set $Z_i^{s,t}$ are pairwise distinct. 

We first show that for every $i \not = k$ we have   $Z_i^{s,t}\cap  Z_k^{s,t} = \emptyset$.  To confirm this,  notice that
for every $j$ and every $i,k$,  $i \not = k$, we have  $v_{j,i+j}^+ \not = v_{j,k+j}^+$,  
$v_{j+\ell,i-j+\ell}^- \not = v_{j+\ell,k-j+\ell}^-$,  $v_{j+2\ell,i-j}^- \not = v_{j+2\ell,k-j}^-$, 
$h_{j,i+j+1} \not = h_{j,k+j+1}$, 
$h_{j+\ell,i-j+\ell-1} \not = h_{j+\ell,k-j+\ell-1}$,  $h_{j+2\ell,i-j-1} \not = h_{j+2\ell,k-j-1}$,  i.e., 
the initial vertices of $Z_i^{s,t}$ and $Z_k^{s,t}$ do not coincide.
It follows that $Z_i^{s,t}\cap  Z_k^{s,t} = \emptyset$.

Since $|Z_i^{s,t}| = 4 \ell + 2t(2h+1)$, $\ell =  \frac{s-t}{2}$  mod $t$ and $h =  \lfloor \frac{s-t}{2t} \rfloor$, 
we obtain $|Z_i^{s,t}| = 2s$. It follows that 
$$|\cup_{i=0}^{t-1} Z_i^{s,t}| = \sum_{i=0}^{t-1} |Z_i^{s,t}| = 2st = |E(C_s \Box C_t)|.$$
This assertion completes the proof.
\end{proof}

\begin{prop1} \label{even}
Let $s \ge t \ge 3$ be odd integers. If $\mathcal{E}_j = \cup_{i \in [t]_0,  i \equiv j  \, \rm{(mod } \, 3)} Z_i^{s,t}$,
  then ${\cal E} = \{  \mathcal{E}_0,\mathcal{E}_1, \mathcal{E}_{2} \}$
  is an even cycle decomposition 
  of $C_s \Box C_t$. 
\end{prop1}

\begin{proof}
Note first that in the set $Z_i^{s,t}$ the following pairs of edges are incident:
\begin{itemize}
\item 
 for every $j \in  [\ell]_0$:   $v_{j,i+j}^+$ and  $h_{j,i+j+1}$ (edges in $Z^{s,t}_{i,1}$); 
 $v_{j+\ell,i-j+\ell}^-$ and $h_{j+\ell,i-j+\ell-1}$ (edges  in $Z^{s,t}_{i,2}$); 

\item for every $j \in [t(2h+1)]_0$:  $v_{j+2\ell,i-j}^-$ and $h_{j+2\ell,i-j-1}$ (edges  in $Z^{s,t}_{i,3}$); 

\item 
 for every $j \in  [\ell-1]_0$:  $h_{j,i+j+1}$ and $v_{j+1,i+j+1}^+$ (edges  in $Z^{s,t}_{i,1}$);  
$h_{j+\ell,i-j+\ell-1}$ and $v_{j+\ell +1,i-j+\ell+1l}^-$ (edges  in $Z^{s,t}_{i,2}$); 
\item for every $j \in [t(2h+1)]_0$:    $h_{j+2\ell,i-j-1}$ and $v_{j+2\ell +1,i-j+1}^-$ (edges  in $Z^{s,t}_{i,3}$).  
 \end{itemize}
  
Moreover,   $h_{\ell-1,i+\ell}$ is incident to $v_{\ell,i+\ell}^- $,  
$h_{2\ell-1,i}$ is incident to $v_{2\ell,i}^- $,  and
$h_{s-1,i} $ is incident to $v_{0,i}^-$.  

It follows that edges of $Z_i^{s,t}$ compose a circle in  $C_s \Box C_t$. 
As we can see in the proof of Proposition \ref{part}, this cycle is of legth $2s$.  
 Furthermore,  if $i  \equiv k \equiv j$ (mod 3)  and $i \not = k$,  then  every initial vertex of an edge of 
$Z_i^{s,t}$ and  every initial vertex of an edge of $Z_k^{s,t}$ are at distance at least 2.   Hence, 
the edges of $\mathcal{E}_j$ compose  disjoint even cycles of $C_s \Box C_t$. 
From Proposition  \ref{part} now it follows that ${\cal E} = \{  \mathcal{E}_0,\mathcal{E}_1, \mathcal{E}_{2} \}$
  is an even cycle decomposition of $C_s \Box C_t$. 
\end{proof}

An even cycle decomposition of  $C_{13} \Box C_{5}$ is depicted in Fig.  \ref{decomposition}. 

\begin{thm} \label{CnCm}
Let $s \ge t \ge 3$.  Then
\begin{displaymath}
\check s(C_s \Box C_t) =
        \left \{ \begin{array}{llll}
             1, &   s  \; { \rm or \;  } t  \; {\rm is \;  even}   \\
             3, &   s  \; { \rm and \;  } t  \; {\rm are \; both \; odd}   \\
             \end{array}. \right.
\end{displaymath}
\end{thm}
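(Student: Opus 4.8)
The plan is to treat the two cases of the statement separately, leaning on the machinery already developed. In both cases the starting point is that $C_s \Box C_t$ is the Cartesian product of two $2$-regular graphs and is therefore $4$-regular. When $s$ or $t$ is even, I would invoke commutativity of the product to let the even factor play the role of $G$ in Corollary \ref{c1}: an even cycle is a class $1$ nontrivial graph and both factors are regular, so Corollary \ref{c1} gives $\check s(C_s \Box C_t) = 1$ at once.

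For the remaining case, with $s$ and $t$ both odd, I would prove the value $3$ by establishing matching lower and upper bounds. The lower bound rests on a parity obstruction: since $st$ is odd, $C_s \Box C_t$ is a $4$-regular graph on an odd number of vertices and hence has no perfect matching. A proper $4$-edge-coloring of a $4$-regular graph would partition its edges into four perfect matchings, which is impossible here, so $\chi'(C_s \Box C_t) = 5$ and the graph is class $2$. Proposition \ref{hornak} then excludes $\check s = 1$, and Proposition \ref{hornak2} excludes $\check s = 2$ for this regular graph, yielding $\check s(C_s \Box C_t) \ge 3$.

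For the upper bound I would convert the even cycle decomposition of Proposition \ref{even} into an explicit coloring. I would color the edges of $\mathcal{E}_0$, $\mathcal{E}_1$, $\mathcal{E}_2$ using the disjoint color pairs $\{1,2\}$, $\{3,4\}$, $\{5,6\}$ respectively, where inside each $\mathcal{E}_j$ one properly $2$-colors each of its disjoint even cycles by alternating the two colors of its pair. Each $Z_i^{s,t}$ is a cycle of length $2s$ and so visits $2s$ distinct vertices; a degree count ($t$ cycles of $2s$ vertices each over $st$ vertices of degree $4$) shows every vertex lies on exactly two of the cycles $Z_0^{s,t}, \ldots, Z_{t-1}^{s,t}$. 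Since the cycles inside a single $\mathcal{E}_j$ are disjoint, these two cycles must fall in two distinct parts $\mathcal{E}_i, \mathcal{E}_j$ with $i \ne j$. Hence the four edges at a vertex split as two edges from one part and two from another with disjoint color pairs, so the coloring is proper, and the palette of every vertex is one of the three sets $\{1,2,3,4\}$, $\{1,2,5,6\}$, $\{3,4,5,6\}$. This gives $\check s(C_s \Box C_t) \le 3$, and together with the lower bound completes the proof.

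The main obstacle I anticipate is the bookkeeping in the upper bound, namely verifying cleanly that each vertex lies on exactly two of the parts $\mathcal{E}_0, \mathcal{E}_1, \mathcal{E}_2$ (equivalently, on two of the cycles $Z_i^{s,t}$ belonging to distinct residues modulo $3$), so that precisely the three palettes above arise and the coloring remains proper. This relies directly on the length-$2s$ and disjointness properties recorded in Propositions \ref{part} and \ref{even}, and converts the \emph{necessary} condition of Bonvicini and Mazzuoccolo into the \emph{sufficient} construction needed here.
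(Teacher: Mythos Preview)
Your proposal is correct and follows essentially the same approach as the paper: Corollary~\ref{c1} for the even case, the class~2 plus Proposition~\ref{hornak2} argument for the lower bound, and a $6$-coloring built from the even cycle decomposition of Proposition~\ref{even} for the upper bound, yielding the same three palettes $\{1,2,3,4\}$, $\{1,2,5,6\}$, $\{3,4,5,6\}$. The only cosmetic differences are that the paper specifies the $2$-coloring of each $\mathcal{E}_j$ by the horizontal/vertical distinction (which coincides with your alternation since each $Z_i^{s,t}$ alternates horizontal and vertical edges) and verifies the two-cycles-per-vertex fact by tracking that adjacent cycles are $Z_i^{s,t}$ and $Z_{i+1}^{s,t}$ rather than by your degree count plus disjointness.
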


\begin{proof}
For $s$ or $t$ even,  the theorem follows from Corollary \ref{c1} since a cycle of even length is clearly class 1.  

If $s$ and $t$ are both odd, then $C_s \Box C_t$ is  class 2 and by 
Proposition \ref{hornak2} we have $\check s(C_s \Box C_t) \ge 3$. 
As shown in \cite{Bonvicini2},  
the existence of an even cycle decomposition of size 3  in  $C_n \Box C_m$ implies 
that $C_n \Box C_m$ admits the palette index 3.  

It is shown in Proposition \ref{even} that ${\cal E} = \{  \mathcal{E}_0,\mathcal{E}_1, \mathcal{E}_{2} \}$
is an even cycle decomposition   of $C_s \Box C_t$ of size 3, where   
$\mathcal{E}_j = \cup_{i \in [t]_0,  i \equiv j  \, \rm{(mod } \, 3)} Z_i^{s,t}$.  
We now construct the  edge coloring $c : E(C_s \Box C_t) \rightarrow [6]$ as follows:

\begin{displaymath}
 c(e) =
        \left \{ \begin{array}{llll}
             2j + 1, &   e  \; { \rm is \;a \; horizontal \; edge \;in}   \;     \mathcal{E}_j  \\
             2j + 2, &   e  \; { \rm is \;a \; vertical \; edge\; in}   \;     \mathcal{E}_j   \\
             \end{array}. \right.
\end{displaymath}
By the definition of the set  $Z_i^{s,t}$,  every terminal vertex of an ascending vertical edge
of $Z_i^{s,t}$ equals the inital vertex of an ascending vertical edge of $Z_{i+1}^{s,t}$, 
while every inital  vertex of a descending  vertical edge
of $Z_i^{s,t}$ corresponds to the terminal vertex of a descending vertical edge of $Z_{i+1}^{s,t}$ (addition modulo 3). 

It follows that the palette of a vertex of $C_s  \Box  C_t$
with respect to $c$ is either $\{1,2,3,4\}$,  $\{1,2,5,6\}$ or $\{3,4,5,6\}$.  This assertion 
completes the proof.
\end{proof}

An example of a proper edge coloring with three palettes of  $C_5  \Box  C_5$ is depicted  in Fig. \ref{pc}.


\section{Paths, cycles and regular graphs}
In this section, we show that general upper bounds on the palette index of a Cartesian product can be significantly improved when one factor graph is a cycle or path and the other is a regular graph.

 \begin{thm} \label{CnG}
Let $G$ be a nontrivial regular graph. If $s \ge 3$,  then $$ {\it  ( i)}  \;  \check s(C_s \Box G)  \le \check s( G) + 2.$$
Moreover,   

 (ii) if $s$ is even or $G$ is class 1, then $\check s(C_s \Box G) = 1$,

 (iii) if $G$ is a class  2 cubic graph with a perfect matching and $s$ is odd, then $\check s(C_s \Box G) \in  \{ 1, 3 \}$.
\end{thm}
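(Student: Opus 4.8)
The plan is to treat the three assertions separately, handling (ii) first since it feeds the others. When $s$ is even the cycle $C_s$ is a class 1 nontrivial regular graph, and when $G$ is class 1 it is itself a class 1 nontrivial regular graph; in either case both factors are regular and one of them is class 1 nontrivial, so Corollary \ref{c1} immediately gives $\check s(C_s \Box G) = 1$. Since $\check s(G) \ge 1$ this also settles (i) in these cases, and it isolates the only substantive case for (i): $s$ odd and $G$ class 2.

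For (i), write $G$ for an $r$-regular graph and fix a proper edge coloring $g$ of $G$ that realizes $\check s(G)$, taken to use $\chi'(G) = r+1$ colors, so that every vertex misses exactly one color $m(v)$. I would copy $g$ onto each of the $s$ layers of $C_s \Box G$. The vertical fibers are disjoint copies of $C_s$; on the fiber over a vertex $v$ I would alternate two fresh colors $\alpha, \beta$ and break the odd parity at one designated edge, recolored with the missing color $m(v)$ (legal since $m(v) \notin P_g(v)$). Away from the defect every vertex receives one $\alpha$ and one $\beta$ on its fiber, so its palette is $P_g(v) \cup \{\alpha, \beta\}$, yielding exactly the $\check s(G)$ base palettes; the two endpoints of the defect edge receive $\{m(v), \alpha\}$ and $\{m(v), \beta\}$ on the fiber, and because $P_g(v) \cup \{m(v)\} = [r+1]$ their palettes collapse to the two $v$-independent sets $[r+1] \cup \{\alpha\}$ and $[r+1] \cup \{\beta\}$. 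Hence the total number of palettes is $\check s(G) + 2$.

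The main obstacle in (i) is precisely this odd-cycle defect: the parity of $C_s$ forces a third color on each fiber, and the danger is that the exceptional palettes scale with $\check s(G)$ instead of adding a constant (a naive construction with three fresh fiber colors only gives the product bound $\approx 3\check s(G)$). The construction circumvents this by spending the defect on $m(v)$ rather than on a genuinely new color, so that the exceptional palettes become the full-set-plus-one sets independent of $v$. This is exactly where it is essential that $g$ use only $\chi'(G)$ colors, so that $P_g(v) \cup \{m(v)\}$ is the same set $[r+1]$ for every $v$; the delicate point is therefore the implicit use that the palette index of a regular graph is attainable with $\chi'(G)$ colors. Checking properness of the combined coloring and placing the defect at a common fiber position are routine.

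For (iii), $C_s \Box G$ is $(r+2)$-regular, so Proposition \ref{hornak2} rules out palette index $2$; it remains to show $\check s(C_s \Box G) \le 3$. Let $M_G$ be a perfect matching of the cubic graph $G$ and let $M$ be the perfect matching of $C_s \Box G$ obtained by placing a copy of $M_G$ in every layer. Then $(C_s \Box G) - M = C_s \Box (G - M_G)$, and since $G - M_G$ is $2$-regular it is a disjoint union of cycles, so $C_s \Box (G - M_G)$ is a disjoint union of products $C_s \Box C_{\ell}$ of two cycles. By Theorem \ref{CnCm} each such product admits an edge coloring with at most three palettes, and these colorings can be chosen to use the common set of palettes $\{1,2,3,4\}$, $\{1,2,5,6\}$, $\{3,4,5,6\}$ (the class 1 components using only $\{1,2,3,4\}$); hence $\check s(C_s \Box (G - M_G)) \le 3$. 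Proposition \ref{matching} then gives $\check s(C_s \Box G) \le \check s((C_s \Box G) - M) \le 3$, and combined with regularity (which excludes $2$) this forces $\check s(C_s \Box G) \in \{1, 3\}$.
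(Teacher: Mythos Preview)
Your treatment of (ii) and (iii) matches the paper's: (ii) is Corollary~\ref{c1}, and (iii) removes a layered copy of a perfect matching $M_G$, reducing to a disjoint union of products $C_s\Box C_\ell$ whose palette colourings can be aligned on the common palette system $\{1,2,3,4\},\{1,2,5,6\},\{3,4,5,6\}$, then invoking Proposition~\ref{matching} and Proposition~\ref{hornak2}. (If anything, your remark about aligning palettes across components is a point the paper leaves implicit.)

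Part (i), however, has a genuine gap. You need a single colouring $g$ of $G$ that simultaneously realises $\check s(G)$ \emph{and} uses only $\chi'(G)=r+1$ colours, so that $P_g(v)\cup\{m(v)\}=[r+1]$ for every $v$. You flag this yourself as ``the delicate point'', and it is indeed not established anywhere in the paper (nor is it a known general fact for class~2 regular graphs). Without it, the two exceptional palettes at the defect edge are $P_g(v)\cup\{m(v),\alpha\}$ and $P_g(v)\cup\{m(v),\beta\}$, which depend on $v$ and can proliferate.

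The paper sidesteps exactly this issue by decoupling the two roles of $g$: it places an arbitrary $(r{+}1)$-colouring $g$ on layers $0,\ldots,s-2$ (so that the missing-colour trick works there), and a separate optimal-palette colouring $h$ on the single layer $s-1$ (with colours disjoint from $\{r+2,r+3\}$). The fibre edges are then coloured by $c\in[r+1]\setminus P_g(v)$ on even steps, $r+2$ on odd steps, and $r+3$ on the wrap-around edge $(s-1,0)$. This yields the constant palettes $[r+2]$ on layers $1,\ldots,s-2$, the constant palette $[r+1]\cup\{r+3\}$ on layer $0$, and the $\check s(G)$ palettes $P_h(v)\cup\{r+2,r+3\}$ on layer $s-1$, for a total of $\check s(G)+2$. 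The fix to your argument is thus to stop asking one colouring to do both jobs and to reserve one layer for $h$.
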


\begin{proof}
If  $s$ is even or $G$ is class 1, then one of the factor graphs is class 1 and (ii)  follows from Corrolary  \ref{c1}. 

To prove (i),   suppose then that $s$ is odd and $G$ a class 2 $r$-regular 
graph.  Let  $g: E(G) \rightarrow [r+1]$ be a proper edge coloring of $G$ and let 
$h$ be a proper edge coloring of $G$ with $\check s(G)$ palettes such that $h(v) \not \in \{ r + 2, r +3  \}$  for every $v \in V(G)$.   
We will construct  a proper edge coloring $f$ of $C_s \Box G$ for $u,v \in V(G)$  and $i,j \in [s]_0$
as follows:

\begin{displaymath}
 f((u,i)(v,j)) =
        \left \{ \begin{array}{llll}
             g(uv),   &   uv \in   E(G)  \;   { \rm and} \;   i = j \not = s - 1     \\
             h(uv),   &   uv \in   E(G)  \;   { \rm and} \;   i = j = s - 1     \\
             c,  &  u = v,  \;  i \in  [s-2]_0 \; { \rm is \;even \;and} \; j = i + 1, \; \\ &  { \rm where} \; c \in [r+1] \setminus  P_g(v)  \\
             r+2,  &  u = v,  \;  i \in  [s-2] \; { \rm is \;odd \;and} \; j = i + 1  \\
             r+3,  &  u = v,  \;  i  = s-1 \; { \rm and} \; j = 0  
            \end{array}. \right.
\end{displaymath}
Since we can see that 

 - for every $i \in [s-2]$ and every $v \in V(G)$ we have $P_f((v,i)) = [r+2]$,

 - for every $v \in V(G)$ we have $P_f((v,0)) = [r+1] \cup \{ r + 3\}$,

 - for every $v \in V(G)$ we have $P_f((v,s-1)) = P_h(v)  \cup \{ r+2,  r + 3\}$, 
 
\noindent
case (i) is settled.

To prove (iii), let $G$ denote a  class  2 cubic graph with a perfect matching $M$ and $F_M$ the corresponding 
1-factor of  $G$, i.e.,  a 1-regular spanning subgraph of $G$. 
Note that $E(C_s  \Box F_M)$ is 
a  perfect matching of $C_s  \Box G$ and  $C_s  \Box G - E(C_s  \Box F_M) = C_s \Box (G-M)$. 

Since $G$ is class  2,  $G$ is not bipartite. 
Moreover,  since $G-M$ is 2-regular,  its connected 
component are cycles with at least one of them having  odd length.  
Thus, $C_s \Box (G-M)$ is a graph whose 
 connected components are  Cartesian products of two cycles. 
Remind from Theorem \ref{CnCm}  that the palette index of the Cartesian product of two cycles is either 1 (at least one of them is of even length) or   3
(both of them are of odd length).  
Therefore,  $\check s(C_s \Box (G-M)) = 3$. 
 By Proposition \ref{matching}, we have 
 $\check s(C_s \Box G) \le \check s(C_s \Box (G-M))  = 3$.  Finally,   Proposition \ref{hornak2} completes the proof.
\end{proof}

\begin{figure}[!ht]
	\centering
		\includegraphics[width=10cm]{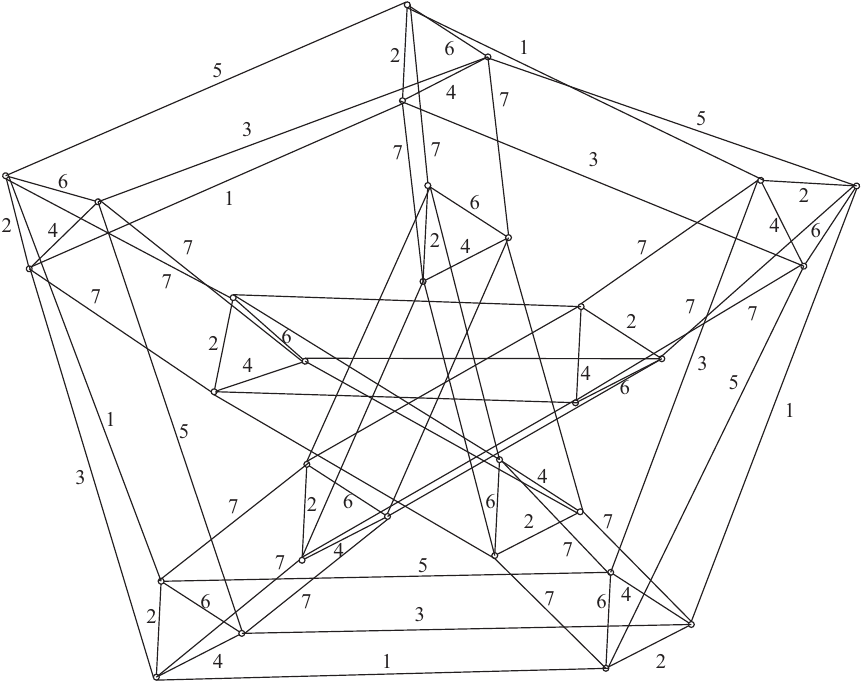}
	\caption{An (incomplete) edge coloring of the Cartesian product of the Petersen graph and triangle with three palettes}
\label{pet}
\end{figure}


As an example of edge colorings provided by Theorem \ref{CnG} (ii), observe Fig. \ref{pet}, where an edge coloring of the Cartesian product of the Petersen graph and the triangle with three palettes is partially depicted. (For clarity, the labeling of the "inner" product of $C_3$ and $C_5$ with colors 1, 3, and 5 is omitted.)
 \begin{thm} \label{PnG}
Let $G$ be a nontrivial regular graph. If $s \ge 3$,  then $$ {\it  ( i)}  \;  \check s(P_s \Box G)  \le \check s( G) + 2.$$
Moreover,   

 (ii) if $s$ even or $G$ is class 1, then $\check s(P_s \Box G) = 2$,

 (iii)  if $G$ is a class  2 cubic graph with a perfect matching and $s$ is odd, then $\check s(P_s \Box G)  \in    \{ 2,  3, 4 \}$.
\end{thm}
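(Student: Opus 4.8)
Write $r$ for the common degree of $G$. The product $P_s\Box G$ has exactly two degree classes: the two end copies of $G$, at path-positions $0$ and $s-1$, have degree $r+1$, while the $s-2$ interior copies have degree $r+2$; hence Observation~\ref{observation} already gives $\check s(P_s\Box G)\ge 2$ in every case, and only the upper bounds remain. For part~(ii) the plan is to realise $P_s\Box G$ as a class~1 nearly regular graph derived from $C_s\Box G$. When $s$ is even or $G$ is class~1, the graph $C_s\Box G$ is regular and class~1 by Corollary~\ref{c1}, so it carries the proper $(r+2)$-edge-colouring $f$ from the proof of Theorem~\ref{mah}, under which every vertex has the full palette $[r+2]$. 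The key point is that the $|V(G)|$ wrap-around edges $(v,0)(v,s-1)$ are all copies of the single cycle edge $\{s-1,0\}$ of $C_s$, so $f$ assigns them one common colour $c_0$ (when $C_s$ plays the role of the class~1 factor, choose the exceptional colour of Theorem~\ref{mah} to differ from the $C_s$-colour of that edge). Thus these wrap edges lie in a single colour class $M$ of $f$, with $(C_s\Box G)-M$ class~1, and $P_s\Box G=(C_s\Box G)-X$ where $X\subset M$ is the set of wrap edges; deleting $X$ from $f$ keeps every interior palette equal to $[r+2]$ and turns both end palettes into $[r+2]\setminus\{c_0\}$. This is exactly the mechanism of Theorem~\ref{nrg} and gives $\check s(P_s\Box G)=2$.

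This also settles part~(i) whenever $s$ is even or $G$ is class~1, so only the case $s$ odd with $G$ class~2 remains, which I would handle by a direct construction. Take a proper colouring $g\colon E(G)\to[r+1]$, under which each vertex $v$ misses a unique colour $m(v)$, together with a proper colouring $h$ of $G$ attaining $\check s(G)$ palettes, relabelled to avoid colour $r+2$. Colour the $G$-edges of the copies at positions $0,\dots,s-2$ by $g$ and those of the copy at position $s-1$ by $h$, and colour each path-edge $(v,i)(v,i+1)$ by $m(v)$ if $i$ is even and by $r+2$ if $i$ is odd. Because $s-2$ is odd, the single path-edge at the last copy receives $r+2\notin P_h(v)$, and one checks that the colouring is proper, that every interior vertex has palette $[r+2]$, every vertex at position $0$ has palette $[r+1]$, and every vertex at position $s-1$ has palette $P_h(v)\cup\{r+2\}$. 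The first two are single palettes and the third ranges over at most $\check s(G)$ sets, so $\check s(P_s\Box G)\le\check s(G)+2$.

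For part~(iii) I would imitate Theorem~\ref{CnG}(iii). Let $F_M$ be the $1$-factor given by the perfect matching $M$ of the cubic graph $G$; then $E(P_s\Box F_M)$ is a perfect matching of $P_s\Box G$ and $(P_s\Box G)-E(P_s\Box F_M)=P_s\Box(G-M)$. Since $G$ is class~2, $G-M$ is a $2$-regular non-bipartite graph, so at least one of its cycle components has odd length; consequently $P_s\Box(G-M)$ is a disjoint union of products $P_s\Box C_\ell$, each of palette index $2$ (for $\ell$ even) or $4$ (for $\ell$ odd) by Theorem~\ref{tpc}. Provided the component colourings can be chosen to draw their palettes from one common four-element pool, this yields $\check s(P_s\Box(G-M))\le 4$, whence Proposition~\ref{matching} gives $\check s(P_s\Box G)\le\check s(P_s\Box(G-M))\le 4$; together with the lower bound $2$ this proves $\check s(P_s\Box G)\in\{2,3,4\}$.

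The main obstacle is this final alignment. Proposition~\ref{matching} bounds $\check s(P_s\Box G)$ only through the palette index of the \emph{disjoint} union $P_s\Box(G-M)$, and to hold that index at $4$ rather than at $4$ per odd component one must check that the four palettes furnished by the both-odd case of Theorem~\ref{tpc} can be taken identical for all odd components and can be arranged to contain the two palettes used on the even components. This needs the explicit colourings behind Theorem~\ref{tpc} and Proposition~\ref{ppc}, not merely their numerical values, just as the analogous alignment to three palettes is used tacitly in Theorem~\ref{CnG}(iii). Parts~(i) and~(ii), by contrast, are routine once the monochromatic-wrap-edge observation is in hand; the one delicate point there is that the hypothesis ``$s$ odd'' is precisely what forces the lone end-column path-edge to avoid $P_h(v)$, which is why the clean bound of part~(i) is established for odd $s$ while even $s$ is absorbed into part~(ii).
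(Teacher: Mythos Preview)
Your proof is correct, and parts~(i) and~(iii) follow the paper's argument essentially verbatim: for~(i) you build the same colouring the paper obtains by restricting the Theorem~\ref{CnG} construction to $P_s\Box G$, and for~(iii) you remove the $1$-factor $E(P_s\Box F_M)$ and invoke Theorem~\ref{tpc} on the path--cycle components, exactly as the paper does. The alignment issue you flag in~(iii) is real but is not a defect of your argument relative to the paper's: the paper likewise asserts $\check s(P_s\Box(G-M))=4$ without spelling out that the $2$- and $4$-palette colourings of the various $P_s\Box C_\ell$ components can be chosen from a common pool, relying tacitly on the explicit constructions behind Theorem~\ref{tpc} and Proposition~\ref{ppc}.

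Where you genuinely diverge is part~(ii). The paper treats the two hypotheses separately: for $s$ even it observes that $P_s$ itself is an NRG derived from $C_s$ and quotes Theorem~\ref{nrg} directly, while for $G$ class~1 and $s$ odd it builds an ad~hoc colouring using three relabelled copies $g,g',g''$ of a $\chi'$-colouring of $G$ (shifting one colour to $1$ or $2$ on the two boundary layers). Your route is more uniform: you work one level up, recognising $P_s\Box G$ as an NRG derived from the class~1 regular graph $C_s\Box G$, and the only thing to check is that the wrap edges form a monochromatic set in the Theorem~\ref{mah} colouring, which you handle by choosing the exceptional colour appropriately. Your argument is shorter and avoids the case split; the paper's approach, on the other hand, exhibits an explicit end-layer colouring for the ``$G$ class~1, $s$ odd'' case that makes the two palettes completely transparent without reference to the product $C_s\Box G$.
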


\begin{proof}
Clearly, $\check s(P_s \Box G) \ge  2$.

If  $s$ is even,  then $P_s$ is a NRG and  (ii) follows from Theorem \ref{nrg}. 
If $G$ is class 1 and $s$ is odd,    we construct  a suitable edge coloring of $P_s \Box G$ 
in the sequel.   

Let $g$, $g'$ and $g''$  be proper edge colorings of $G$ with exactly one palette,  such that for every $v \in V(G)$
we have  $1,2  \not \in P_g(v))$,
$P_{g'}(v) = \{2 \} \cup (P_g(v) \setminus \{ c \})$ and  $P_{g''}(v) = \{1 \} \cup (P_g(v) \setminus \{ c \})$,  where  $c$ is an 
arbitrary color of $g$.  We  contruct a proper edge  coloring $f$  of $P_s \Box G$ with two distinct palettes as follows:

 - for every odd $i \in [s-2]$ and every $v \in V(G)$ we set $f((v,i)(v,i-1)) = 1$ and $f((v,i)(v,i+1)) = 2$, 

 - for every $i \in [s-2]$ and every $uv \in E(G)$  we set $f((u,i)(v,i)) = g(uv)$,

 - for every  $v \in V(G)$  we set $f((u,0)(v,0)) = g'(uv)$ and $f((u,s-1)(v,s-1)) = g''(uv)$.

\noindent
For example consider an edge coloring of $P_5 \Box C_4$ depicted in Fig. \ref{pc} (b). 
 Note that $g$, $g'$, and $g"$  are edge colorings of 
$C_4$ with one palette, where for every $v \in V(C_4)$ we have $P_{g}(v) = \{ 3, 4 \}$, $P_{g'}(v) = \{ 2, 3 \}$ and $P_{g"}(v) = \{ 1, 3 \}$.

Since we can see that  for every $i \in [s-2]$ and every $v \in V(G)$ we have $P_f((v,i)) = P_g(v) \cup \{ 1, 2 \}  $,
while for every $v \in V(G)$ we have $P_f((v,0)) = P_f((v, s-1)) = (P_g(v) \setminus \{ c \})  \cup \{ 1, 2 \} $,
case (ii) is settled.  

To prove (i),    consider  again a proper edge coloring $f$ of $C_s \Box G$ contructed in the proof of 
Theorem \ref{CnG}, where $s$ is odd and $G$ a class 2 $r$-regular graph. 
It is not difficult to see that $f$ restricted to $P_s \Box G$ 
for every $i \in [s-2]$ and every $v \in V(G)$ implies $P_f((v,i)) = [r+2]$, while 
 for every $v \in V(G)$ we have $P_f((v,0)) = [r+1]$ and $P_f((v,s-1)) = P_h((v))  \cup \{ r + 2\}$.
This argument settles the proof of case (i).

To prove (iii), let $G$ be a cubic graph with a perfect matching $M$, and let $F_M$ be the corresponding 1-factor of $G$. Analogously to the proof of Theorem \ref{CnG} (iii), we notice that $P_s \Box (G - M)$ is a graph whose connected components are Cartesian products of a path and a cycle, such that at least one of the factors is induced on an odd number of vertices. 

By Theorem \ref{tpc}, the palette index of the Cartesian product of a path and a cycle is either 2 (if at least one of the factors has an even number of vertices) or 4 (if both factors have an odd number of vertices). Thus, we obtain that $\check s(P_s \Box (G - M)) = 4$. 

By Proposition \ref{matching}, we have $\check s(P_s \Box G) \le \check s(P_s \Box (G - M)) = 4$. It follows that $\check s(P_s \Box G) \in \{ 2, 3, 4 \}$, and the proof is complete.
\end{proof}

Note that the bound provided by Theorem \ref{CnG} (i) improves the bounds from Propositions \ref{bound} and \ref{regular}. For example, let $G$ be isomorphic to $K_7$. From \cite{hornak}, we know that $\check{s}(K_7) = 3$, which implies $\check{s}(C_s \Box K_7) \leq 5$. In contrast, Propositions \ref{bound} and \ref{regular} provide the weaker bound $\check{s}(C_s \Box K_7) \leq 9$.
However, it remains unknown whether the bounds given by Theorem \ref{CnG} (i) and Theorem \ref{PnG} (i) are sharp. 

Regarding the lower bound on $\check{s}(G \Box H)$, it is worth noting that, in general, it does not depend on $\check{s}(G)$ and $\check{s}(H)$. Specifically, if $G$ and $H$ are regular graphs that both contain a perfect matching, then $\check{s}(G \Box H) = 1$, even if $\check{s}(G) > 1$ and $\check{s}(H) > 1$ (see \cite[Theorem 2.2]{mohar}).
\section*{Funding}
This work was supported by the Slovenian Research Agency under the grant P1-0297.


\small
\baselineskip=9pt

\end{document}